
\documentclass[letterpaper, 10 pt, conference]{ieeeconf}  

\IEEEoverridecommandlockouts                              

\overrideIEEEmargins                                      

\pdfminorversion=4



\pdfminorversion=7

\usepackage{amsmath,amssymb,amsfonts,amsthm}
\usepackage{amsmath,environ}
\usepackage[scr=dutchcal]{mathalpha}
\usepackage{cite}
\usepackage{grffile}
\usepackage{multicol}
\usepackage{caption}
\usepackage{graphics}
 \usepackage{subcaption}
\usepackage{url}
\usepackage{enumitem}
\usepackage{tabularx}
\let\mathopfont=\mathrm
\newcommand{\trace}{\mathop{\mathopfont{trace}}}

\newcommand{\mcl}[1]{\mathcal{ #1}}
\newcommand{\mbf}[1]{\mathbf{ #1}}
\newcommand{\norm}[1]{\left\Vert #1\right\Vert}

\newcommand{\ip}[2]{\left\langle{#1},{#2}\right\rangle}

\newcommand{\bmat}[1]{\begin{bmatrix} #1\end{bmatrix}}

\newcommand{\R}{\mathbb{R}}
\newcommand{\C}{\mathbb{C}}


\newtheorem{thm}{Theorem}
\newtheorem{defn}[thm]{Definition}
\newtheorem{lem}[thm]{Lemma}

\newtheorem{cor}[thm]{Corollary}


\newcommand{\PI}{\pmb{\Pi}}
\newcommand{\pie}{\scalebox{0.9}[1.2]{$\mathit{\Pi}$}}

\newcommand{\fourpi}[4]{\hspace{0.5mm}\pie\hspace{-1.mm}\left[\footnotesize\begin{array}{c|c}
#1&#2\\\hline #3 & \{#4\}
\end{array}\right]}

\setlength{\abovedisplayskip}{3pt}
\setlength{\belowdisplayskip}{3pt}
\allowdisplaybreaks

\title{\LARGE \bf
$H_2$-Optimal Estimation of Linear Delayed and PDE Systems}

\author{Danilo Braghini$^{1}$, Sachin Shivakumar$^{2}$ and Matthew M. Peet$^{3}$
\thanks{This work was supported by the National Science Foundation under grants No. 2337751 and 2429973.}
\thanks{$^{1}$ Danilo Braghini\{{\tt\small dbraghini@asu.edu}\} and $^{3}$ Matthew M. Peet\{{\tt\small mpeet@asu.edu}\} are with the School for Engineering of Matter, Transport and Energy at Arizona State University. $^{2}$ Sachin Shivakumar\{{\tt\small sshivakumar@lanl.gov}\} is with Center for Nonlinear Studies and Applied Mathematics \& Plasma Physics Division of Los Alamos National Laboratory.
        }%
}
\begin{document}
\maketitle
\thispagestyle{empty}
\pagestyle{empty}
\begin{abstract}
The $H_2$ norm is a commonly used performance metric in the design of estimators. However, $H_2$-optimal estimation of most PDEs is complicated by the lack of transfer function and state-space representations. To address this problem, we first re-characterize the $H_2$-norm in terms of a map from initial condition to output. We then leverage the Partial Integral Equation (PIE) state-space representation of systems of linear PDEs coupled with ODEs to recast this characterization of $H_2$-norm as a convex optimization problem defined in terms of Linear Partial Integral (LPI) inequalities. 
We then parameterize a class of PIE-based observers and solve the associated $H_2$-optimal estimation problem. The 
resulting observers are validated using numerical simulation.

\end{abstract}
\section{Introduction}

Partial Differential Equations (PDEs) are used to describe the evolution of processes whose states
are distributed over a spatial domain. Examples of such processes include fluid flow~\cite{chen2011h2}, vibroacoustics~\cite{ambrosio2012h2}, chemical reaction networks~\cite{unsleber2020exploration}, and time-delay systems~\cite{richard2003time}, where the corresponding distributed states
are velocity profile, displacement, species concentration, and history. For such systems, it is often desirable to be able to track the evolution of the system using sensor measurements -- either for the purpose of feedback control~\cite{smyshlyaev2004closed} 
or for monitoring and fault detection~\cite{huang2009fault}.

Unlike Ordinary Differential Equations (ODEs) and other such lumped-parameter systems, however, direct measurement of the system state of a PDE requires an uncountable number of sensors -- a practical impossibility. Consequently, there has been significant interest in the development of observers wherein, by tracking a finite set of measurements, we may infer real-time estimates of the entire distributed state. For ODEs, the problem of state estimation has been largely solved, with special cases
including the Luenberger observer, the Kalman filter, and Linear Matrix Inequalities (LMIs) for $H_{\infty}$-optimal observers and filters -- methods that can be applied to state estimation for any linear ODE with state-space representation.
However, for PDEs, the need to integrate boundary conditions and the distributed states precludes the existence of a convenient and universal state-space representation. This means that most efforts to design estimators for such systems are ad hoc -- requiring significant modification for even minor changes in the model~\cite{backstepping}. As a result, most approaches to the estimation of the PDE state entail a reduction of the PDE state to finite dimensions, either through early-lumping~\cite{mechhoud2014estimation} 
(by reducing the distributed states to finite dimensions), or late-lumping~\cite{benner2018numerical} 
(which enforces synthesis conditions on a finite number of test functions).

Recently, efforts have been made to synthesize observers for PDE systems without lumping through the use of a more convenient state-space representation of PDEs. This method integrates the PDE evolution equation with the boundary conditions by defining the state as the highest spatial derivative of the distributed state and parameterizing the evolution of this state by means of integral operators with polynomial kernels. This method has the advantage that such operators form an algebra, which can be represented using matrices and optimized using LMIs. The representation of a PDE using such operators is referred to as a Partial Integral Equation (PIE), and methods for the construction of PIE representations of a broad class of PDEs are well-established~\cite{peet2021partial,shivakumarGPDE,
peet2021representation}.

Observer designs which minimize an $L_2$-gain bound for PDE systems that admit a PIE representation have previously been presented in~\cite{das_cdc2019,jagt2024h} and for time-delay systems in~\cite{wu2023h}. 
The problem with minimization of $L_2$-gain, however, is that disturbances such as sensor noise are not typically characterized in terms of energy, but rather in terms of frequency content and power spectral density -- implying that the $H_2$ norm is a more suitable performance metric in the design of observers (e.g., LQG and Kalman filters).

The goal of this paper, then, is to formulate and solve the problem of $H_2$-optimal observer synthesis. Unlike $H_{\infty}$-optimal 
estimation, wherein a proxy for $H_\infty$ performance is $L_2$-gain, the main technical difficulty for $H_2$-optimal estimation is the identification of a time-domain proxy for $H_2$ performance. To address this difficulty, we rely on an initial condition to output characterization of the $H_2$ metric as proposed in~\cite{danilo}. This allows us to extend classical LMIs for $H_2$-performance to LPI-type conditions to performance bounds on the error dynamics of the PIE-based observer.

This paper is structured as follows. First, Section~\ref{sec:preliminaries} defines PI operators, PIEs, and LPIs. Section~\ref{sec:problems} introduces a time-domain characterization of the $H_2$ norm and formulates $H_2$-optimal observer synthesis problem. Section~\ref{sec:h2norm} gives an LPI characterization of the $H_2$-norm of a PIE, and Section~\ref{sec:h2estimator} extends this result to give an LPI condition for computing $H_2$-optimal observer gains. Section~\ref{sec:reconstruction} gives a procedure to find observer gains from the LPI solution, and Section~\ref{sec:num} presents numerical examples for observer validation. 

\textbf{Notation}: $L_2^p[a,b]$ and $L_2^p[0,\infty]$ are the spaces of \textit{Lebesgue} square-integrable $\R^p$-valued functions.
$\R L_2^{m,p}[a,b]$ denotes the Hilbert space $\R^{m}\times L_2^{p}[a,b]$. Occasionally, 
the domain  
is omitted when clear from context.

%
%
\section{State Space and Convex Optimization: PIs, PIEs, and LPIs}\label{sec:preliminaries}

In this section, we introduce the algebra of 
Partial Integral (PI) operators, the class of systems modeled using Partial Integral Equations (PIEs), and the class of convex optimization problems defined in terms of Linear PI  (LPI) Inequality constraints.\vspace{-2mm}
%
%

\subsection{The Algebra of Partial Integral Operators}

We begin by defining the algebra of partial integral operators, which will be used to parameterize PIEs in Subsec.~\ref{subsec:PIE}.\vspace{-1mm} 

\begin{defn}\label{def:4PI}
Given a matrix $P$ and polynomials $Q_1,Q_2,R_0,R_1$, a \textbf{4-PI operator} $\mcl P= \fourpi{P}{Q_1}{Q_2}{R_i} \in \mcl L(\R L_2^{m_1,n_1},\R L_2^{m_2,n_2})$ is such that
\begin{align*}
&\left(\mcl P \bmat{x\\\mbf x} \right)(s) := \bmat{Px + \int_{a}^{b}Q_1(\theta)\mbf x(\theta)d\theta\\Q_2(s)x+ \mcl R\mbf{x} (s)},\;  \text{where}\\
&\left(\mcl R\mbf x\right)\hspace{-.25mm}(s)\hspace{-1mm}= \hspace{-1mm}R_0(s) \mbf x(s) +\hspace{-1.5mm}\int\limits_{a}^s  \hspace{-1.5mm}R_1(s,\theta)\mbf x(\theta)d \theta+\hspace{-1.5mm}\int\limits_s^b \hspace{-1.5mm}R_2(s,\theta)\mbf x(\theta)d \theta.
\end{align*}
\end{defn}
We refer to $\PI_4\subset \mcl L(\R L_2^{m_1,n_1},\R L_2^{m_2,n_2})$ as the set of 4-PI operators. If $m_1=m_2$ and $n_1=n_2$, this set of PI operators is closed under composition, addition, and adjoint; explicit formulae for these operations can be obtained in terms of the polynomial matrices used to parameterize them~\cite{shivakumarGPDE}.

As in Def.~\ref{def:4PI}, the notation $\fourpi{P}{Q_1}{Q_2}{R_i}$ is used to indicate the 4-PI operator associated with the matrix $P$ and polynomial parameters $Q_i$, $R_j$. The associated dimensions ($m_1,n_1,m_2,n_2$) are inherited from the dimensions of the constant matrix $P\in \R^{m_2 \times m_1}$ and polynomial matrices $Q_1(s) \in \R^{m_2 \times n_1}$, $Q_2(s) \in \R^{n_2 \times m_1}$, and $R_0(s),R_1(s,\theta), R_2(s,\theta) \in \R^{n_2 \times n_1}$. In the case where a dimension is zero, we use $\emptyset$ in place of the associated parameter with zero dimension.\vspace{-2mm} 
%
%
%

\subsection{Partial Integral Equations}\label{subsec:PIE}\vspace{-1 mm}

It has been shown in, e.g.~\cite{shivakumarGPDE}, that a large class of PDE coupled with ODEs, with sensed and regulated outputs $y(t) \in \R^{n_y}, z(t)\in \R^{n_z}$, and in-domain disturbances, $w(t)\in\R^{n_w}$, may be equivalently represented using a partial integral equation (PIE) of the form
\begin{align}\label{PIE}
\partial_t(\mcl T \mbf x(t))&= \mcl A  \mbf x(t)+\mcl B_1 w(t), \qquad   \mbf x(0)\in \R L_2,\notag\\
z(t)&=\mcl C_1  \mbf x(t),\qquad y(t)=\mcl C_2  \mbf x(t) +  D_{21} w(t),
\end{align}
where 
$\mcl T$, $\mcl A$, $\mcl B_1$, $\mcl C_1$, $\mcl C_2\in \PI_4$, $D_{21}\in \R^{n_y \times n_w}$. 
The solution of the PIE, $\mbf x(t) \in \R L^{m,n}_2[a,b]$, yields a solution to the PDE as $\mcl T\mbf x(t)$. The PIE state, $\mbf x(t)$, combines the ODE state with a spatial derivative of the PDE state and admits no boundary conditions or continuity constraints.

The solution of this class of PIE is formally defined as follows, where $x \in L_{2e}^p[0,\infty)$ means $x(t)\in \R^p$ and $\int_0^T \norm{x(t)}^2dt$ is finite for all $T \geq 0$.\vspace{-1mm}

 \begin{defn}[PIE solution]\label{def:piesol}
        Given $\mcl T$, $\mcl A$, $\mcl B_1$, $\mcl C_1$, $\mcl C_2\in \PI_4$, $D_{21}\in \R^{n_y \times n_w}$, we say $\{\mbf x, z, y\}$ is a solution to the PIE system for given initial condition $\mbf x(0)\in \R L_2^{m,n}[a,b]$ and input $w\in L_{2e}^{n_w}[0,\infty)$, if $\mcl T\mbf x(t)$ is \textit{Fr\'echet} differentiable for all $t\in [0,\infty)$, and if $\mbf x(t) \in \R L_2^{m,n}[a,b]$, $z\in L_{2e}^{n_z}[0,\infty)$, and $y\in L_{2e}^{n_y}[0,\infty)$ satisfy Eq.~\eqref{PIE} for all $t\in [0,\infty)$.
 \end{defn}\vspace{-4mm}

\subsection{Linear PI Operator Inequalities}\label{subsec:LPIs}\vspace{-1mm}

As described in Subsec.~\ref{subsec:PIE}, a large class of PDEs coupled with ODEs admit a PIE representation parametrized by 4-PI operators of the form given in Def.~\ref{def:4PI}. Later, 
Sec.~\ref{sec:h2estimator} 
 shows that the problem of $H_2$-optimal estimator synthesis for PIEs can be reformulated as an optimization problem whose decision variables are 4-PI operators and have PI-operator valued affine-in-variable inequality constraints -- a form of convex optimization problem 
 defined as a Linear PI Inequality (LPI). To illustrate how LPIs may be solved, let us consider the following example from~\cite{danilo}. 

The LPI formulation of $H_2$-norm of a PIE, proposed in \cite{danilo}, involves a constraint of the form $\mcl Q:=-(\mcl A^{\ast} \mcl P \mcl T + \mcl T^{\ast} \mcl P \mcl A + \mcl C_1^{\ast} \mcl C_1) \succcurlyeq 0$ with $\mcl P\succcurlyeq 0$ where, if $\mcl P,\mcl A,\mcl T,\mcl C_1\in \PI_4$, then $\mcl Q \in \PI_4$. 
To verify feasibility of such conditions, we enforce positivity of the variables $\mcl P,\mcl Q$ as $\mcl P = \mcl Z^* P\mcl Z$ and $\mcl Q = \mcl Z^*Q\mcl Z$ where $\mcl Z$ is a fixed basis of PI-operators and $Q, P\succcurlyeq 0$ are 
matrix variables. Then we may enforce the equality 
$\mcl A^*\mcl Z^* P\mcl Z\mcl T+\mcl T^*\mcl Z^*P\mcl Z\mcl A+ \mcl C_1^*\mcl C_1 = -\mcl Z^*Q\mcl Z,$
which is interpreted in terms of equality constraints on the coefficients of the polynomials which define $\mcl Z,\mcl A,\mcl T,{\mcl C}_1$. 

The steps involved in the above LPI solution procedure, namely, parsing the LPIs, parameterizing decision variables using matrices, extracting the SDP constraints, and retrieving the operators from the solution of SDP, are automated in PIETOOLS Matlab toolbox~\cite{manual}.\vspace{-2mm}

\section{Problem Formulation}\label{sec:problems}\vspace{-1 mm}

This section 
introduces a suitable time-domain characterization of the $H_2$ norm 
used to define the problems of $H_2$ norm bounding and $H_2$-optimal estimation for systems that admit a PIE representation.\vspace{-2mm}

\subsection{The $H_2$ norm of a PIE}\vspace{-1mm}

For this subsection, 
consider the characterization of the $H_2$ norm of a system represented by a PIE of the form
\begin{align}
\partial_t (\mcl T \mbf x(t))&= \mcl A \mbf x(t)+\mcl B_1 w(t),\, z(t)=\mcl C_1 \mbf x(t), \label{PIE2}
\end{align}
with $\mcl T\mbf x(0) =0$, where $\mbf x(t) \in \R L_2^{m,n}[a,b]$ is the state, $w(t) \in \R^{n_w}$ is a disturbance, and $z(t) \in \R^{nz}$ is the output. Specifically, we define the $H_2$ norm of this system as the $L_2$-gain of initial condition to output of an auxiliary system with no disturbance. While non-standard, we will see that this characterization of $H_2$ performance is equivalent in a certain sense to the standard definition of $H_2$ norm.\vspace{-2 mm}

\begin{defn}\label{def:h2norm}
Consider solutions of the auxiliary PIE
\begin{align}
\partial_t (\mcl T \mbf x(t))&= \mcl A \mbf x(t), \, 
z(t)=\mcl C_1 \mbf x(t), 
\, \mcl T \mbf{x}(0) =\mcl{B}_1 x_0,\label{eqn:PIEprime}
\end{align}
where $x_0 \in \R^{n_w}$. We define the $H_2$ norm of System~\eqref{PIE2} (denoted $G(\mcl T, \mcl A,\mcl B_1,\mcl C_1)$) as
\[
 \norm{G(\mcl T, \mcl A,\mcl B_1,\mcl C_1)}_{H_2}:=\sup_{\substack{z,\mbf x\, \text{satisfy~\eqref{eqn:PIEprime}}\\ \norm{x_0}_2=1}} \norm{z}_{L_2}.
\]
\end{defn}

To see the relationship between 
Def.~\ref{def:h2norm} and the standard definition of the $H_2$ norm, consider an ODE of the form
\begin{equation}
\bmat{\dot x(t)\\z(t)}=\bmat{A&B\\C& 0}\bmat{x(t)\\w(t)},\qquad  \forall t \in [0, \infty).\label{ODE}
\end{equation}
\vspace{-3 mm}

\begin{cor}\label{thm:H2defIC}
Suppose $A$ is Hurwitz and $\hat G(s)=C(sI-~A)^{-1}B$. 
Consider solutions of the auxiliary ODE
\begin{align}
\dot x(t)&= A  x(t), \quad
z(t)= C x(t), \quad 
x(0) =B x_0.\label{ODE2}
\end{align}
Then
\[
\sup_{\substack{z, x\, \text{satisfies~\eqref{ODE2}}\\ \norm{x_0}_2=1}} \norm{z}_{L_2} \le \norm{\hat G}_{H_2}\le \sqrt{n_w} \sup_{\substack{z, x \, \text{satisfies~\eqref{ODE2}}\\ \norm{x_0}_2=1}} \norm{z}_{L_2}.
\]
\end{cor}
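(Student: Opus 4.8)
The statement to prove is Corollary~\ref{thm:H2defIC}, which sandwiches the true $H_2$ norm $\|G\|_{H_2}$ between the initial-condition-to-output $L_2$-gain of the auxiliary ODE~\eqref{ODE2} and $\sqrt{n_w}$ times that same quantity. The plan is to start from the time-domain formula for $\|\hat G\|_{H_2}^2$ already displayed in the excerpt, namely $\|\hat G\|_{H_2}^2 = \trace\left(B^T W B\right)$ where $W := \int_0^\infty e^{A^T\tau} C^T C\, e^{A\tau}\,d\tau$ is the observability Gramian, and then compare this trace against the quantity $M := \sup_{\|x_0\|=1} \|z\|_{L_2}^2$ for the auxiliary system. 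The key observation is that for a fixed initial condition $x(0) = Bx_0$, the solution of~\eqref{ODE2} is $x(\tau) = e^{A\tau}Bx_0$, so $z(\tau) = Ce^{A\tau}Bx_0$ and hence $\|z\|_{L_2}^2 = \int_0^\infty x_0^T B^T e^{A^T\tau}C^TC e^{A\tau}B x_0\, d\tau = x_0^T (B^T W B) x_0$. Therefore $M = \lambda_{\max}(B^T W B)$, the largest eigenvalue of the $n_w \times n_w$ positive semidefinite matrix $B^T W B$.

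\textbf{Carrying it out.} With $N := B^T W B \succeq 0$ an $n_w \times n_w$ symmetric matrix, the two quantities in play are $\|G\|_{H_2}^2 = \trace(N)$ and $M = \lambda_{\max}(N)$, and the corollary reduces to the elementary linear-algebra fact
\[
\lambda_{\max}(N) \;\le\; \trace(N) \;\le\; n_w\,\lambda_{\max}(N),
\]
valid for any $N \succeq 0$ of size $n_w$. The left inequality holds because $\trace(N) = \sum_{i=1}^{n_w} \lambda_i(N)$ is a sum of nonnegative terms, one of which is $\lambda_{\max}(N)$; the right inequality holds because each $\lambda_i(N) \le \lambda_{\max}(N)$. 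Taking square roots and recalling $M = \sup_{\|x_0\|=1}\|z\|_{L_2}^2$, i.e. $\sqrt M = \sup_{\|x_0\|=1}\|z\|_{L_2}$, gives exactly
\[
\sqrt M \le \|G\|_{H_2} \le \sqrt{n_w}\,\sqrt M,
\]
which is the claimed chain of inequalities.

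\textbf{Where the work really is.} None of the above is hard; the only genuine step is justifying that the supremum defining $M$ is attained and equals $\lambda_{\max}(B^TWB)$, which follows from the Rayleigh quotient characterization of the largest eigenvalue of a symmetric matrix together with the explicit solution formula $x(\tau)=e^{A\tau}Bx_0$ and Hurwitz-ness of $A$ (which guarantees the Gramian integral converges, so $z \in L_2$). One should also note that the supremum is finite precisely because $A$ is Hurwitz, so the auxiliary output decays exponentially. The corollary matters because Definition~\ref{def:h2norm} adopts the left-hand quantity as \emph{the} definition of $\|G\|_{H_2}$ for PIEs, and this corollary shows that for the finite-dimensional ODE case that definition recovers the classical $H_2$ norm up to the dimension-dependent constant $\sqrt{n_w}$ — equality holding exactly when $n_w = 1$. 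The same Gramian/Rayleigh-quotient argument will carry over verbatim to the PIE setting once the operator-valued observability Gramian is shown to be well-defined, which is where the subsequent LPI characterization in Section~\ref{sec:h2norm} does the real heavy lifting.
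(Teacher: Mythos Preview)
Your argument is correct and is essentially the same as the paper's: both compute $\|z\|_{L_2}^2 = x_0^T N x_0$ with $N = B^T\bigl(\int_0^\infty e^{A^T\tau}C^TCe^{A\tau}\,d\tau\bigr)B$, identify the supremum over $\|x_0\|=1$ with $\lambda_{\max}(N)=\bar\sigma(N)$, and then invoke the elementary inequalities $\lambda_{\max}(N)\le\trace(N)\le n_w\,\lambda_{\max}(N)$ for a positive semidefinite $n_w\times n_w$ matrix. Your presentation is slightly tidier in naming the Gramian $W$ and the matrix $N$ explicitly, but the substance is identical.
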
\vspace{-2 mm}

\begin{proof}
	Suppose  $\{x,z\}$ satisfy~\eqref{ODE2} with initial condition $x(0)=B x_0$. Then $x(t) =e^{At}Bx_0$ and hence if $\norm{x_0}_2=1$, we have
	\begin{align*}
	\norm{z}_{L_2}^2&=\int_{0}^{\infty} x_0^T B^T e^{A^T\tau} C^T C e^{A\tau}Bx_0 d\tau\\
	&\le \trace \left(\int_{0}^{\infty} B^T e^{A^T\tau} C^T C e^{A\tau}B d\tau\right)=\norm{\hat G}^2_{H_2}.
	\end{align*}
Furthermore,
	\begin{align*}
	\hspace{-1 mm}\frac{1}{n_w}\norm{\hat G}^2_{H_2} 
	\le \bar \sigma\left(\int_{0}^{\infty} B^T e^{A^T\tau} C^T C e^{A\tau}B d\tau\right)
\hspace{-1 mm}= \hspace{-2 mm} \sup_{\norm{x_0}_2=1} \hspace{-2 mm}\norm{z}_{L_2}^2.
	\end{align*}
\end{proof}\vspace{-3mm}

Clearly, if the PIE has a single input, the proposed definition of $H_2$ norm coincides with the typical definition. In the case of multiple inputs, the time-domain characterization of $H_2$ norm would coincide with the 
alternative definition 
\[
\Vert{\hat{G}}\Vert_{H_2}^2=\frac{1}{2\pi} \int_{-\infty}^{\infty} \bar{\sigma}\left(\hat G^*(\imath\omega) \hat G(\imath\omega) d\omega \right).
\]

Having defined the $H_2$-norm, we proceed to formulate the $H_2$-optimal estimator synthesis problem.\vspace{-2 mm}

\subsection{$H_2$-Optimal Estimators}\label{subsec:problem_est}\vspace{-1 mm}

Our goal is to design observers for the class of coupled ODE-PDE systems that admit a PIE representation as
\begin{align}
\partial_t(\mcl T \mbf x(t))&= \mcl A  \mbf x(t)+\mcl B_1 w(t), \qquad  \mcl T\mbf x(0) =0,\notag\\
z(t)&=\mcl C_1  \mbf x(t),\qquad y(t)=\mcl C_2  \mbf x(t) + D_{21} w(t),\label{PIE_repeat}
\end{align}
where recall the state of the original PDE is obtained from the solution of the PIE as $\mcl T \mbf x(t)$. The signal $y(t)$ contains measurements of the PDE, and $z(t)$ represents those parts of the state by which we will measure the performance of our estimator. Our estimator dynamics are then assumed to have the Luenberger observer structure
\begin{align}
 \partial_t\left(\mcl T \tilde{\mbf x}(t) \right)&=\mcl A \tilde{\mbf x}(t) +\mcl L \left(\mcl C_2 \tilde{\mbf x}(t)- y(t)\right),\, \mcl T\tilde{\mbf x}(0)=  0,\label{PIEobserver2}
\end{align}
which mirror the dynamics of the observed system, but without the disturbance $w(t)$, which is unknown. The term, $\mcl C_2 \tilde{\mbf x}(t)- y(t)$, reflects the difference between the predicted and measured output from the PDE. This term is weighted by the observer gain, $\mcl L:\R^{n_y} \rightarrow \R L_2^{m,n}$, which is taken to be a PI operator. By combining the observer in Eq.~\eqref{PIEobserver2} with the measured output of a PDE, real-time estimates of the PDE state can be obtained as $\mcl T \tilde{\mbf x}(t)$ and used in conjunction with state-feedback controllers or fault detection algorithms.

The $H_2$-optimal estimation problem, then, is to choose $\mcl L$ which minimizes the $H_2$-norm of the map from disturbance $w$ to error in the regulated output, which we define as $e_z(t)=\mcl C_1\tilde{\mbf x}(t)-z(t)$. This map can likewise be represented as a PIE with state $\mbf e(t)=\tilde{\mbf x} (t)-\mbf x(t)$, where $\tilde{\mbf x}$ satisfies Eq.~\eqref{PIEobserver2} and $\mbf x$ satisfies Eq.~\eqref{PIE_repeat} so that
 \begin{align}\label{eq:PIEerror}
\partial_t\left(\mcl T \mbf e(t) \right) &= (\mcl A+\mcl L\mcl C_2) \mbf e(t)-\left(\mcl B_1+\mcl L D_{21} \right)w(t),\notag\\
e_z(t) &= \mcl C_1 \mbf e(t), \qquad \mcl T \mbf e(0)= 0.
\end{align}
We see that Eq.~\eqref{eq:PIEerror} is of the form in Eq.~\eqref{PIE2} with $\mcl A \mapsto \mcl A+\mcl L\mcl C_2$, $\mcl B_1 \mapsto -\left(\mcl B_1+\mcl L  D_{21} \right)$ and $\mcl C_1 \mapsto \mcl C_1$. 
Then, the $H_2$-optimal synthesis problem can be formulated as 
\begin{equation}
\min_{\mcl L \in \PI_4}\norm{G(\mcl T, (\mcl A+\mcl L\mcl C_2),-\left(\mcl B_1+\mcl L D_{21} \right) ,\mcl C_1)}_{H_2},\label{prob:estimator}
\end{equation}
using Def.~\ref{def:h2norm}. 
Sec.~\ref{sec:h2estimator} will reformulate the $H_2$-optimal estimation problem as an LPI. First, however, we need to address the problem of computing the $H_2$-norm of a PIE using LPIs. 
\vspace{-3 mm}

\section{An LPI for the $H_2$ norm}\label{sec:h2norm}\vspace{-1 mm}

In this section, we show how to use LPIs to compute the $H_2$ norm of a PIE. We begin by 
reformulating the following result from~\cite{danilo}.\vspace{-2 mm}

\begin{thm}[~\cite{danilo}]\label{h2-norm}
        Given $\mcl{T,A},\mcl B_1, \mcl C_1\in \PI_4$, 
        suppose there exist constant $\epsilon>0$ and $\mcl P
        \in \PI_4$ such that $\mcl P\succcurlyeq \epsilon I$, 
		\begin{align*}
		\hspace{-3 mm}
			\mcl A^{\ast} \mcl P \mcl T + \mcl T^{\ast} \mcl P \mcl A + \mcl C_1^{\ast} \mcl C_1 \preccurlyeq -\epsilon I,
		\end{align*}
and $\trace(\mcl B_1^{\ast} \mcl P \mcl B_1) \leq \gamma^2.$ Then, $\norm{G(\mcl T, \mcl A,\mcl B_1,\mcl C_1)}_{H_2}\leq \gamma.$

\end{thm}\vspace{-2 mm}

We can now use an extension of the Schur complement to obtain an LPI for bounding the $H_2$ norm, which will be used for estimator design in Sec.~\ref{sec:h2estimator}. This reformulation, however, requires us to define vertical and horizontal concatenation of $\PI_4$ operators such that the concatenated operator is in $\PI_4$ (See Lem.~39 and Lem.~40 from~\cite{shivakumarGPDE}). This definition separately concatenates the real and distributed portions of the operator so that if, e.g., $\mcl P \in \mcl L(\R L_2^{n,m})$ and  $\mcl Q \in \mcl L(\R L_2^{p,q})$, then
\[
\bmat{\mcl P&0\\0&\mcl Q}\in \mcl L(\R^{n+p} \times L_2^{m+q}).
\]
In the proof of the following lemma, we do not reorder rows and columns. However, the result holds for the standard definition of concatenation since inequalities are preserved under symmetric reordering of rows and columns.\vspace{-2 mm}

\begin{lem}[Schur Complement]\label{lem:schur}
Suppose $\mcl P,\mcl Q,\mcl R \in \PI_4$. Then the following are equivalent.
\begin{enumerate}
\item
$\displaystyle \bmat{\mcl P&\mcl Q\\\mcl Q^*&\mcl R}\succcurlyeq \epsilon I $ for some $\epsilon>0$.
\item $\displaystyle \mcl R-\mcl Q^* \mcl P^{-1} \mcl Q \succcurlyeq \epsilon I $ and  $\mcl P\succcurlyeq \epsilon I$ for some $\epsilon>0$.
\end{enumerate}

\end{lem}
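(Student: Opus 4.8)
The plan is to reduce the operator Schur complement to the well-known matrix Schur complement by exploiting the fact that $\PI_4$ is a $*$-algebra closed under composition, addition, and adjoint, and that positivity of a $\PI_4$ operator is a genuine Hilbert-space positivity statement. The key structural input is that $P \succ \epsilon I$ on $\R L_2^{n,m}$ guarantees $P$ is boundedly invertible with $P^{-1} \in \PI_4$ (the inverse of a coercive, self-adjoint $\PI_4$ operator stays in $\PI_4$; this is the operator analogue of the fact that these operators form a composition algebra, and it is exactly the property that makes the expression $R - Q^* P^{-1} Q$ well-defined as a $\PI_4$ operator). So the first step is to record that, in statement (2), $P^{-1}$ exists as an element of $\PI_4$, so that the Schur complement $R - Q^* P^{-1} Q$ makes sense; and in statement (1), $P \succ \epsilon I$ is an immediate consequence of $\bmat{P & Q^*\\ Q & R}\succ \epsilon I$ by restricting the quadratic form to vectors of the form $\bmat{v\\0}$.

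Next I would carry out the standard congruence/factorization argument at the operator level. Define the block operator $\mcl U := \bmat{I & 0\\ -QP^{-1} & I}$, which lies in $\PI_4$ (it is built from $I$, $Q$, and $P^{-1}$ by composition and addition) and is boundedly invertible with $\mcl U^{-1} = \bmat{I & 0\\ QP^{-1} & I} \in \PI_4$. A direct computation gives
\[
\mcl U \bmat{P & Q^*\\ Q & R}\mcl U^* = \bmat{P & 0\\ 0 & R - Q P^{-1} Q^*}.
\]
Here I must be slightly careful about which Schur complement appears: because $P$ is self-adjoint, $(QP^{-1})^* = P^{-1}Q^*$, and the $(2,2)$ block of the product is $R - QP^{-1}Q^*$; since $P = P^*$ one has $QP^{-1}Q^* = Q(P^*)^{-1}Q^* = (Q^*)^* P^{-1}(Q^*)$, which matches $Q^*P^{-1}Q$ after the identification of $Q$ with the "lower-left" block — I would simply fix the convention at the start so the statement $R - Q^*P^{-1}Q$ in the lemma is literally what comes out, or note the symmetric relabeling. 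Because $\mcl U$ is boundedly invertible, the congruence $M \mapsto \mcl U M \mcl U^*$ preserves strict positivity (with a possibly adjusted $\epsilon$): $M \succ \epsilon I \iff \mcl U M \mcl U^* \succ \epsilon' I$ for some $\epsilon' > 0$, using $\epsilon' = \epsilon / \norm{\mcl U^{-1}}^2$ in one direction and symmetrically in the other. Finally, a block-diagonal self-adjoint operator $\bmat{P & 0\\ 0 & S}$ is $\succ \epsilon I$ iff both $P \succ \epsilon I$ and $S \succ \epsilon I$, since its quadratic form splits as a sum over the two coordinates of $\R L_2^{n,m}\times \R L_2^{p,q}$. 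Chaining these equivalences gives (1) $\iff$ (2).

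The main obstacle — and the only place the argument differs from the finite-dimensional Schur complement — is justifying that $P^{-1} \in \PI_4$ and is bounded when $P \succ \epsilon I$. Bounded invertibility on the Hilbert space is automatic from coercivity ($P \succeq \epsilon I$ self-adjoint implies $P$ is bijective with $\norm{P^{-1}}\le 1/\epsilon$ by Lax–Milgram), but membership of the inverse in the polynomial-parameterized class $\PI_4$ is the substantive point; I would either cite the relevant inversion result for PI operators from~\cite{shivakumarGPDE} (the same source that establishes closure under composition and adjoint) or, if no clean citation covers a general coercive $\PI_4$ operator, remark that the inverse is obtained within the algebra via the $\PI_4$ formula for inverting operators of this block form. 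Everything else is routine: the congruence identity is a finite block computation with operator entries, and the positivity bookkeeping with explicit $\epsilon$'s is elementary. I would also note explicitly, as the excerpt already flags, that no reordering of rows/columns is needed because the congruence transformation is performed directly on the $\R^{n+p}\times L_2^{m+q}$ decomposition, and that the conclusion transfers to the standard (reordered) concatenation because symmetric permutation of rows and columns is itself a congruence by a (bounded, invertible) permutation-type operator and hence preserves the inequalities.
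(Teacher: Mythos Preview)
Your congruence/factorization argument is essentially the paper's own proof: both establish $\mcl P\succ\epsilon I$ by restricting to $\bmat{\mbf x\\0}$, then use the block-triangular congruence to diagonalize and transfer coercivity in both directions via the operator norm of $\mcl U^{-1}$. One correction: you identify ``$\mcl P^{-1}\in\PI_4$'' as the main obstacle, but neither the statement nor the proof requires this --- only bounded invertibility of $\mcl P$ on $\R L_2$ is used (which, as you note, is automatic from coercivity), and in fact the inverse of a coercive $\PI_4$ operator is generally \emph{not} in $\PI_4$ (cf.\ Section~\ref{sec:reconstruction}, where $\mcl P^{-1}$ is only numerically \emph{approximated} by a PI operator); so you should drop that claim and simply work with $\mcl P^{-1}\in\mcl L(\R L_2)$.
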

\begin{proof} In this proof, there is no rearrangement of rows or columns. 
Mirroring the standard proof of the Schur complement, suppose that 1) is true. Then, we have
\[
\ip{\mbf x}{\mcl P\mbf x}=\ip{\bmat{\mbf x\\0}}{\bmat{\mcl P&\mcl Q\\\mcl Q^*&\mcl R}\bmat{\mbf x\\0}}\ge \epsilon \norm{\mbf x}^2,
\]
which implies that $\mcl P$ is invertible. Now note that
\small{
\[
\bmat{\mcl P&0\\0&\mcl R-\mcl Q^* \mcl P^{-1} \mcl Q}=\bmat{I &\hspace{-2mm}-\mcl P^{-1}\mcl Q\\0&\hspace{-1mm}I}^* \bmat{\mcl P&\mcl Q\\\mcl Q^*&\mcl R}\bmat{I &\hspace{-2mm}-\mcl P^{-1}\mcl Q\\0&\hspace{-1mm}I},
\]}
and hence
\begin{align*}
&\ip{\mbf x}{(\mcl R-\mcl Q^* \mcl P^{-1} \mcl Q)\mbf x}=\ip{\bmat{0\\\mbf x}}{\bmat{\mcl P&0\\0&\mcl R-\mcl Q^* \mcl P^{-1} \mcl Q}\bmat{0\\\mbf x}}\\
&=\ip{\bmat{-\mcl P^{-1}\mcl Q \mbf x\\ \mbf x}}{\bmat{\mcl P&\mcl Q\\\mcl Q^*&\mcl R}\bmat{-\mcl P^{-1}\mcl Q \mbf x\\ \mbf x}}\\
&\ge \epsilon \norm{\bmat{-\mcl P^{-1}\mcl Q \mbf x\\ \mbf x}}^2 \ge \epsilon \norm{\mbf x}^2.
\end{align*}

For the converse, suppose 2) is true. Then
\[
\bmat{\mcl P&\mcl Q\\\mcl Q^*&\mcl R}=\bmat{I &\mcl P^{-1}\mcl Q\\0&I}^*\bmat{\mcl P&0\\0&\mcl R-\mcl Q^* \mcl P^{-1} \mcl Q}\bmat{I &\mcl P^{-1}\mcl Q\\0&I},
\]
which implies
\[
\ip{\bmat{\mbf x \\\mbf y}}{\bmat{\mcl P&\mcl Q\\\mcl Q^*&\mcl R} \bmat{\mbf x \\\mbf y}}\ge \epsilon \norm{\bmat{I &\mcl P^{-1}\mcl Q\\0&I}\bmat{\mbf x \\\mbf y}}^2.
\]

Now, define $\norm{\bmat{I &\mcl P^{-1}\mcl Q\\0&I}^{-1}}_{\mcl L(\R L_2)}=\delta$. Then
\[
\norm{\bmat{I &\mcl P^{-1}\mcl Q\\0&I}\bmat{\mbf x \\\mbf y}}^2\ge \delta\norm{\bmat{\mbf x \\\mbf y}}^2,
\]
and hence
\[
\ip{\bmat{\mbf x \\\mbf y}}{\bmat{\mcl P&\mcl Q\\\mcl Q^*&\mcl R} \bmat{\mbf x \\\mbf y}}\ge \epsilon \delta \norm{\bmat{\mbf x \\\mbf y}}^2.
\]
\end{proof}\vspace{-6 mm}

\begin{thm}\label{h2-norm_v2}
		Given $\mcl{T,A,B}_1, \mcl C_1 \in \PI_4$, suppose there exist constant $\epsilon>0$, matrix $W$, and $\mcl P \in \PI_4$ such that 
        $\mcl P\succcurlyeq \epsilon I$,
		\begin{align}\label{H2_ineq}
	\hspace{-3 mm}	\bmat{-\gamma I & \mcl C_1  \\ \mcl C_1^* & \mcl T^{\ast} \mcl P \mcl A+\mcl A^{\ast} \mcl P \mcl T }	\preccurlyeq -\epsilon I,
\bmat{W&\mcl B_1^*\mcl P\\\mcl P\mcl B_1 &\mcl P}\succcurlyeq \epsilon I,
\end{align}
and	$\trace( W) \leq \gamma.$ 
Then, $\norm{G(\mcl T, \mcl A,\mcl B_1,\mcl C_1)}_{H_2}\leq \gamma.$
\end{thm}\vspace{-4 mm}

\begin{proof}
    Suppose $\gamma, \mcl P, \mcl Z$ are as stated above. Then, the first inequality in 
    Eq.~\eqref{H2_ineq} combined with Lem.~\ref{lem:schur} implies\vspace{-4 mm}
    
    \[
    \mcl A^{\ast} \mcl P \mcl T + \mcl T^{\ast} \mcl P \mcl A + \gamma^{-1} \mcl C^{\ast} \mcl C \preccurlyeq -\epsilon I.
    \]\vspace{-6 mm}
    
    Likewise, the second inequality in Eq.
    ~\eqref{H2_ineq} 
    implies
    \begin{align*}
         W-\mcl B^*\mcl P\mcl P^{-1}\mcl P\mcl B=W-\mcl B^*\mcl P\mcl B  \succ 0.
            \end{align*}
            Now $W$ and $\mcl B^*\mcl P\mcl B$ are matrices and hence $\trace (\mcl B^*\mcl P\mcl B)< \trace W \leq \gamma$. Define $\hat{\mcl{P}} = \gamma \mcl P$ so that $\mcl P = \gamma^{-1}\hat{\mcl{P}} $ and hence
            \[
  \mcl A^{\ast} \hat{\mcl P} \mcl T + \mcl T^{\ast} \hat{\mcl P} \mcl A + \mcl C^{\ast} \mcl C \preccurlyeq -\gamma \epsilon I,           \qquad \trace (\mcl B^*\hat{\mcl P}\mcl B) \leq \gamma^2,
            \]
            which implies the conditions of Thm.~\ref{h2-norm} are satisfied.
\end{proof}\vspace{-2mm}

In the next section, Thm.~\ref{h2-norm_v2} is used to design observers that minimize a bound on the $H_2$ norm of the error dynamics.\vspace{-2mm}
			
\section{An LPI for $H_2$-optimal Estimation}\label{sec:h2estimator}\vspace{-2 mm}

This section, 
considers the problem of designing the estimator gain $\mcl L \in \PI_4$ which minimizes a bound on the $H_2$ norm of the error dynamics defined in Subsec.~\ref{subsec:problem_est}.\vspace{-1 mm} 

\begin{thm}\label{thm:h2-estimator}
Given $\mcl{T,A,B}_1, \mcl C_1, \mcl C_2 \in \PI_4$, $D_{21} \in \R^{n_y \times n_w}$, suppose there exist constant $\epsilon>0$, matrix $W$, and $\mcl{P,Z} \in \PI_4$ 
such that $\mcl P
\succcurlyeq \epsilon I$, 
\begin{align*}
		\bmat{-\gamma I & \mcl C_1  \\ \mcl C_1^* & \mcl T^{\ast} \mcl P \mcl A+\mcl A^{\ast} \mcl P \mcl T +\mcl T^*\mcl Z\mcl C_2+\mcl C_2^*\mcl Z^*\mcl T}	&\preccurlyeq -\epsilon I,\\
\bmat{W& -(\mcl B_1^*\mcl P+D_{21}^T\mcl Z^*)\\-(\mcl P\mcl B_1+\mcl ZD_{21}) &\mcl P}&\succcurlyeq \epsilon I, \end{align*}
and $\trace( W) \leq \gamma.$ 
Then, 
\[\norm{G(\mcl T, (\mcl A+\mcl L\mcl C_2),-\left(\mcl B_1+\mcl L D_{21} \right) ,\mcl C_1)}_{H_2} \leq \gamma,\] where $\mcl L = \mcl P^{-1}\mcl Z$.
\end{thm}
\begin{proof}
Let $\mcl L = \mcl P^{-1}\mcl Z$. Then
 		\begin{align*}
&		\bmat{-\gamma I & \mcl C_1  \\ \mcl C_1^* & \mcl T^{\ast} \mcl P \left(\mcl A +\mcl L \mcl C_2\right)+\left(\mcl A +\mcl L \mcl C_2\right)^{\ast} \mcl P \mcl T }	\\
&=\bmat{-\gamma I & \mcl C_1  \\ \mcl C_1^* & \mcl T^{\ast} \mcl P \left(\mcl A +\mcl P^{-1}\mcl Z \mcl C_2\right)+\left(\mcl A +\mcl P^{-1}\mcl Z \mcl C_2\right)^{\ast} \mcl P \mcl T }\\
&=\bmat{-\gamma I & \mcl C_1  \\ \mcl C_1^* & \mcl{ T^{\ast} P} \mcl A+\mcl{ A^{\ast} P} \mcl T +\mcl T^*\mcl Z\mcl C_2+\mcl C_2^*\mcl Z^*\mcl T}	\preccurlyeq -\epsilon I,
		\end{align*}
and
 		\begin{align*}
&\bmat{W&-\left(\mcl B_1+\mcl L D_{21}\right)^*\mcl P\\-\mcl P\left(\mcl B_1+\mcl L D_{21}\right) &\mcl P}\\
&=\bmat{W& -(\mcl B_1^*\mcl P+D_{21}^T\mcl Z^*)\\-(\mcl P\mcl B_1+\mcl ZD_{21}) &\mcl P}\succcurlyeq \epsilon I.
		\end{align*}
Application of Thm.~\ref{h2-norm_v2} completes the proof.
\end{proof}

\section{Estimator Gain Reconstruction}\label{sec:reconstruction}\vspace{-1mm}

In this section, we 
suppose that $\mcl P,\mcl Z$ minimize $\gamma$, subject to the constraints in Thm.~\ref{thm:h2-estimator}, and
%
 construct the observer gain $\mcl L = \mcl P^{-1}\mcl Z$.
%
%
First, 
note that if $\mcl P\in \PI_4$ is invertible
, then the inverse $\mcl P^{-1}$ can be computed using, e.g. Lem.~17 in~\cite{shivakumardual} and numerically approximated by a PI operator
\[
\mcl P^{-1} \approx \hat{\mcl P}:= \fourpi{\hat P}{\hat Q}{\hat Q^T}{\hat R_i}.
\]
Furthermore, for $\mcl Z= \fourpi{Z_1}{\emptyset}{Z_2}{\emptyset}$, we have, by the 4-PI composition formula~\cite{shivakumarGPDE}, that $\mcl L= \fourpi{L_1}{\emptyset}{L_2}{\emptyset}$, where\vspace{-4 mm}

\begin{align*}
L_1&=\hat P Z_1 + \int_a^b \hat Q(s)Z_2(s) ds, \, \\
L_2(s)&=\hat Q(s)^T Z_1+\hat R_0(s)Z_2(s)
 +\int_a^b \hat R(s,\theta)Z_2(\theta)d \theta, 
\end{align*}
$\hat{R}(s,\theta)=\hat{R}_1(s,\theta)$ for $a \leq \theta \leq s$, $\hat{R}(s,\theta)=\hat{R}_2(s,\theta)$ for $s < \theta \leq b$. $L_1$ represents the correction to the ODE state, and $L_2(s)$ represents a correction to the distributed state. 

\section{Numerical Examples}\label{sec:num}\vspace{-1mm}

In this section, we validate the proposed algorithm for observer synthesis by constructing the $H_2$-optimal observer gains and numerically integrating the estimator dynamics using the output from numerical integration of the associated 
system subject to disturbances. Illustrative examples include a delay system (Ex.~A), and two PDEs: an unstable non-homogeneous reaction-diffusion equation (Ex.~B) and an energy-preserving Euler-Bernoulli beam equation (Ex.~C). For the delay system, the resulting estimator is compared with the non-convex approach to $H_2$ estimation taken in~\cite{suh2006h}. 

The conversion to PIE 
is automated by the command-line 
input option of PIETOOLS~\cite{manual}. 
Solution of the LPI in Thm.~\ref{thm:h2-estimator}, operator inversion, and estimator gain reconstruction are likewise performed using PIETOOLS. Numerical integration of both the PIE estimator and PDE plant is performed using a Galerkin projection with Chebyshev bases of order up to $8$, 
implemented in PIESIM~\cite{peet2020piesim}. 

\noindent \textbf{Example A: } 
 Consider the time-delay system from~\cite{suh2006h} 
\begin{align*}
    \dot{ x}(t) &= A_0  x(t) + A_d  x(t-\tau) +B_{1}w(t)+ B_{2} u(t),\\
     y(t)&=C_{2}x(t)+C_{d}x(t-\tau)+D_{21}w(t), \,  z(t) = C_{1}  x(t), \notag
\end{align*}
where $x(t), z(t), w(t) \in \R^{2}$, $u(t), y(t) \in \R$, and
\begin{align*}
    &A_0 = \bmat{
        -2 & 1\\
        0 & -1 
    },\,  A_d = \bmat{
        -1 & 0 \\
        -1 & -1 }, \,C_{1} = \text{I}_{2}, \,C_{2} = \bmat{0 \\1 }\\
    & B_{1} =  \bmat{0.2& 0\\0.2& 0}
    , \,B_{2} = C_d=\bmat{1 \\1}, \,
    D_{21}=\bmat{0 &0.5}.
\end{align*}
Following the construction in~\cite{peet2021representation}, the equivalent PIE representation, 
is obtained by defining the PIE state 
$x(t)=\tau \dot x(t+\tau s)$, which yields
\begin{align*}
\mcl T &=\fourpi{\text{I}_{2}}{0}{\text{I}_{2}}{0,0,-\text{I}_{2}},&\mcl B_1 &=\fourpi{B_{1}}{\emptyset}{0}{\emptyset},\\
\mcl A &=\fourpi{A_{0}+A_{d}}{-A_{d}}{0}{I_{\tau},0,0},& \mcl B_2 &=\fourpi{B_{2}}{\emptyset}{0}{\emptyset},\\ \mcl C_2 &=\fourpi{C_{2}+C_{d}}{-C_{d}}{\emptyset}{\emptyset} , & \mcl C_1 &=\fourpi{C_{1}}{0}{\emptyset}{\emptyset}.
\end{align*}

To show the performance of the $H_2$-optimal estimator resulting from Thm.~\ref{thm:h2-estimator}, we simulate the system with a time step of $0.001 s$. 
Fig.~\ref{fig:observer-dde} illustrates a numerical simulation for: non-zero initial conditions; a unit step $u(t)$; and 
concatenated process and measurement noises $w(t)=e^{-t}[sin(10t)~\,~ sin(100t)]^T$, demonstrating convergence of the error to zero when the inputs are $L_2$-bounded. Tab.~\ref{table_norms} shows that the optimal bounds 
obtained using Thm.~\ref{thm:h2-estimator} are consistent with those obtained in~\cite{suh2006h}.

\begin{figure}[b]
    \centering
    \includegraphics[width=\linewidth,height=40mm, keepaspectratio]{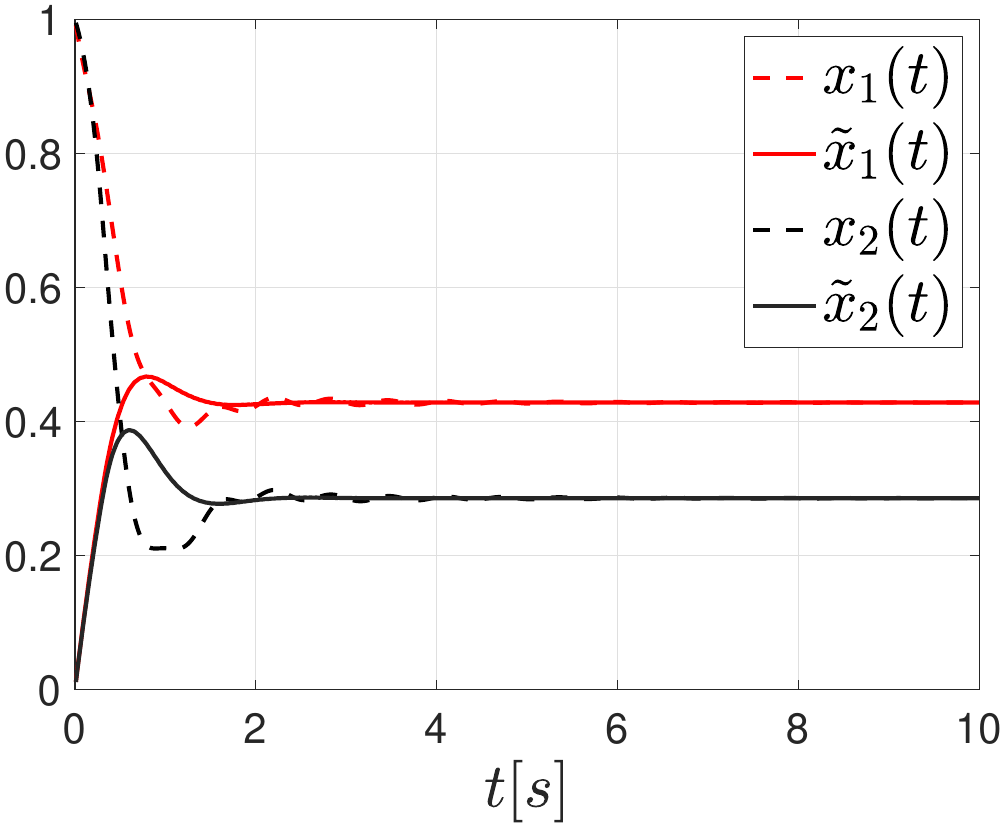}
     \caption{Numerical estimation of an $H_2$-optimal estimator for a time-delay system (Ex.~A), with $\tau = 0.3 s$, along with process and sensor noise. The estimated states are simulated with 
     the input $u(t)=1, \, t \geq0$; the initial conditions $(x_{1}(t),x_2(t))=(1,1)$ for $t \in [-0.3,0]$; 
     process noise $w_1(t)=e^{-\frac{t}{2}}sin(10t), \, t\geq 0$, and measurement noise $w_2(t)=e^{-\frac{t}{2}}sin(100t), \,t\geq 0$. Red lines correspond to $\tilde x_1(t)$, and black lines correspond to $\tilde x_2(t)$. 
     Dashed lines show the system states, and solid lines show the estimated states.
     }
     \label{fig:observer-dde}
    \end{figure}
\noindent \textbf{Example B: }
In this example, consider the unstable 
reaction-diffusion PDE with sensor and process noise.
\begin{align*}
    &\dot{\mbf \xi}(t,s) = 3\mbf \xi(t,s)+(s^2+0.2)\partial_s^2\mbf \xi(t,s)-\frac{s^2}{2}w(t),\notag\\
    &\mbf \xi(t,0)=\partial_s \mbf \xi(t,1)=0, \quad y(t) = \mbf \xi(t,1)+w(t),
\end{align*}
and $z(t)= \int_0^1 \mbf \xi(t,\theta)d\theta$. PIETOOLS is used to obtain the PIE representation of this PDE with PIE state $\mbf x(t)=\partial_s^2\xi(t)$:
\begin{align*}
\mcl T &=\fourpi{\emptyset}{\emptyset}{\emptyset}{0,R_1,R_2}, &\mcl B_1 &=\fourpi{\emptyset}{\emptyset}{0.5s^2}{\emptyset},\\
\mcl A &=\fourpi{\emptyset}{\emptyset}{\emptyset}{S_0,3R_1,3R_2}, & \mcl C_1 &=\fourpi{\emptyset}{p(s)}{\emptyset}{\emptyset},\\ \mcl C_2 &=\fourpi{\emptyset}{-s}{\emptyset}{\emptyset} , \quad D_{21} =1, &p(s)&=\frac{s^2-2s}{2}
\end{align*}
where $R_1(s,\theta)=-\theta$, $R_2(s,\theta)=-s$, and $S_0(s)=s^2+0.2$.
We numerically simulate the PDE and $H_2$-optimal estimator with a time step of $0.002 s$, $w(t)=e^{-t}sin(100t)$, and PDE initial condition $\mbf \xi(0,s)=s^2/2-s$ implying $\mbf x(0,s) = 1$.
As seen in Fig.~\ref{fig:observer-react-diff}, the estimation errors for both the state and the regulated output decay quickly despite instability in the PDE and high-frequency excitation.

\noindent \textbf{Example C: }
Consider a cantilevered Euler-Bernoulli beam, with displacement $\eta(t,s)$, with both sensor and process noise, where the sensor measures tip velocity at the right boundary. The PDE may be written in the first-order form
by defining $\mbf v(t,s)=\left(\dot{ \eta}(t,s), \partial_s^2\eta(t,s)\right)$~\cite{peet2021partial},  
as
\begin{align*}
&\dot{\mbf{v}}(t) = \bmat{0&-0.1\\1&0}\partial_s^2\mbf{v}(t)+\bmat{\frac{s^2-2s}{2}\\0}w(t)+\bmat{1\\0}u(t),\label{eq:EBSS}\\
&\bmat{1&0} \mbf q(t,0) = \bmat{0&1} \mbf q(t,1) =0, \text{where } \mbf q:=\mbf v-\partial_s \mbf v \notag\\
&z(t) = \int_0^1 \bmat{1&0}\mbf v(t,s)ds,\quad y(t) = \bmat{1&0}\mbf v(t,1)+w(t) \notag.
\end{align*}
As in Ex.~B, we find the PIE system parameters to be
\begin{align*}
\mcl T &=\fourpi{\emptyset}{\emptyset}{\emptyset}{R_0, R_1, R_2}, \, \mcl A =\fourpi{\emptyset}{\emptyset}{\emptyset}{S_0, S_1, S_2},\\
\mcl B_1&=\fourpi{\emptyset}{\emptyset}{[p(s)~\,~0]^T}{\emptyset}, \,\mcl C_1=\fourpi{\emptyset}{[p(s)+1/2~\,~0]}{\emptyset}{\emptyset}, \\
\mcl C_2 &=\fourpi{\emptyset}{[1-s~\,~0]}{\emptyset}{\emptyset} , \, D_{21} =1, \quad p(s)=\frac{s^2-2s}{2}
\end{align*}
\begin{align*}
   &R_0(s)=S_1(s,\theta)=S_2(s,\theta)=\bmat{0 &\hspace{-1.5mm}0\\0&\hspace{-1.5mm}0},\; S_0(s)=\bmat{0 &\hspace{-2mm}-0.1\\1&\hspace{-2mm}0},\\ 
   &R_1(s,\theta)=\bmat{s-\theta &0\\0&0},\; R_2(s,\theta)=\bmat{0&0\\0&\theta-s},
\end{align*}
 with PIE state $\mbf x(t)=\partial_s^2\mbf v(t)$. We numerically simulate the Euler-Bernoulli beam and $H_2$-optimal estimator with $w(t)=e^{-\frac{t}{2}}sin(10t)$, PDE initial condition $\mbf v(0,s)=(s^2/2,0)$ ($\mbf x(0,s) = (1,0)$), and a time step of $0.001 s$. Visualizing the estimation errors in Fig. \ref{fig:EBbeam}, we again see that the estimation errors in both the state and the regulated output decay quickly while the energy of the beam itself is preserved.
\section{Conclusion}\label{sec:conclusion}\vspace{-1 mm}

\begin{table}[b]
    \caption{Bounds on the $H_2$-norm of the estimator for system in Ex.~A obtained from~\cite{suh2006h} and Thm.~\ref{thm:h2-estimator}.}	
    \small
    \centering
    \begin{tabular}{ |m{1.9 cm} | m{.8cm} |  m{0.8cm} |  m{.8cm} |   m{0.8cm} | }
        \hline
        
        $\tau$  & 0.1  & 0.3 & 0.5 & 0.7\\ \hline
        
        Suh, et al. \cite{suh2006h} & 0.1342 &  0.1559 & 0.1792 & 0.2059\\ \hline
        
        Thm.~\ref{thm:h2-estimator} &0.1326 & 0.1546 & 0.1771 & 0.2009  						\\ \hline	
    \end{tabular}
    \label{table_norms}
\end{table}
Finding observers with optimal $H_2$ norm for a delayed or PDE system is complicated by the lack of an equivalent time-domain characterization of this norm. To address this problem, we have proposed an alternative initial condition to output characterization of the $H_2$ norm and applied this characterization to the PIE representation of the error dynamics. This approach allows the optimal observer synthesis problem to be posed as an LPI, which can then be solved using existing software. The proposed approach allows for efficient design of estimators, with provable bounds on performance, for a large class of PDEs and delay systems. The results were applied to design $H_2$-optimal estimators for 3 examples and validated using simulation.
\begin{figure}[t]
    \centering
    \begin{subfigure}{0.49\linewidth}
       \includegraphics[width=\linewidth, height=40mm]{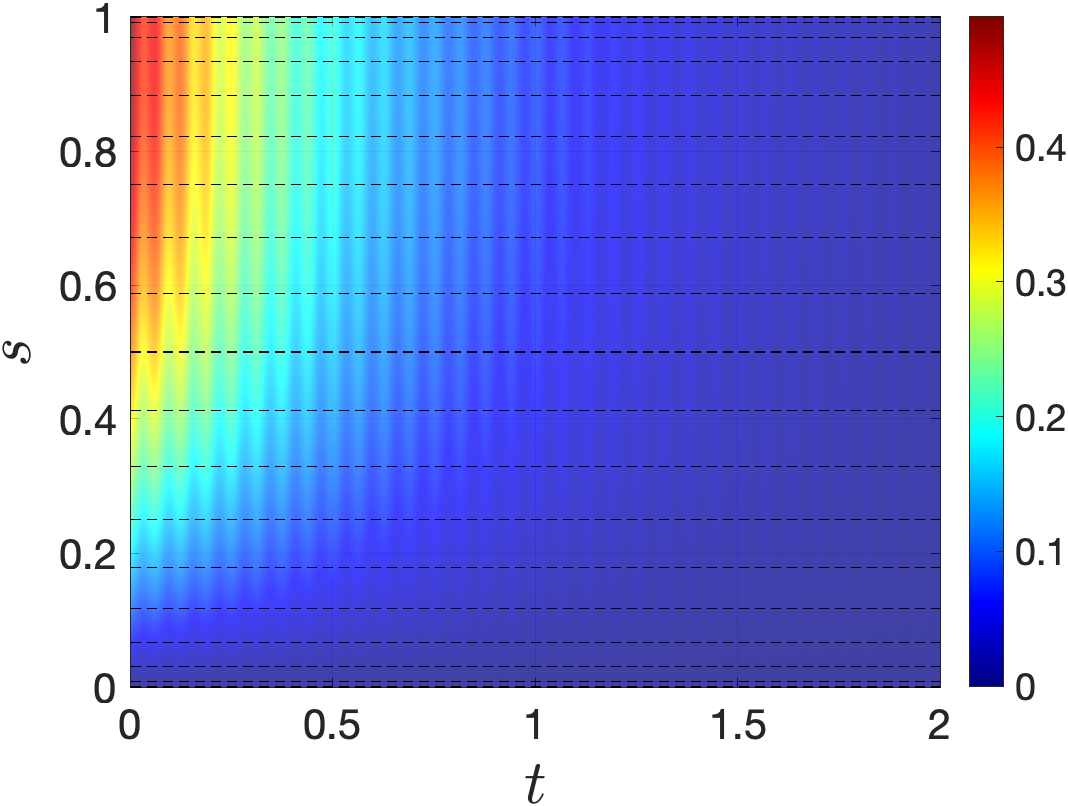}
    \caption{}
    \end{subfigure}
    \begin{subfigure}{0.49\linewidth}
           \includegraphics[width=\linewidth,height=40mm]{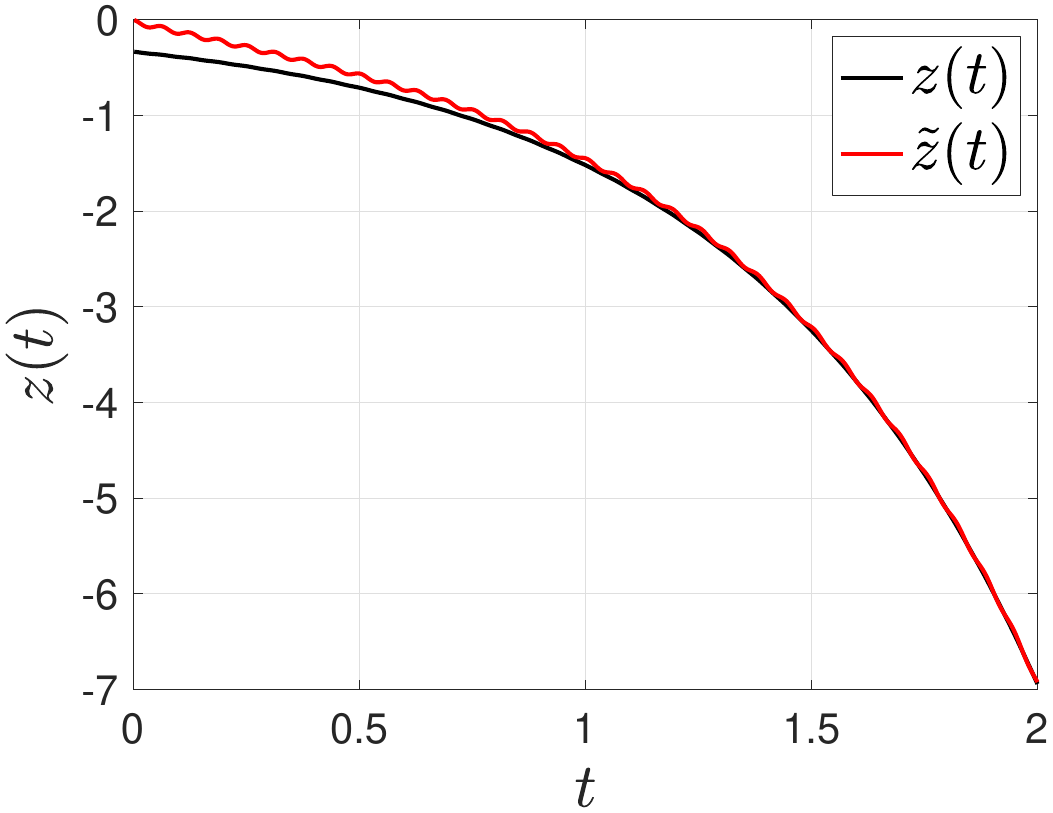}
    \caption{}
    \end{subfigure}
    \caption{Numerical estimation of an $H_2$-optimal estimator for an unstable reaction-diffusion equation (Ex.~B) using measurement at the boundary along with process and sensor disturbance $w(t)=e^{-t}sin(100t)$ and PDE initial condition $\mbf \xi(0,s)=s^2/2-s$. 
    (a): Evolution of error in estimate of the PDE state $\mcl T \mbf e(t)=\mcl T\tilde{\mbf x}(t)-\mbf \xi(t)$. (b): Evolution of the regulated output $z(t)$ of both estimator and PDE.}
    \label{fig:observer-react-diff}
\end{figure}
     \begin{figure}[t]
    \centering
    \begin{subfigure}{0.49\linewidth}
    \includegraphics[width=\linewidth, height=40mm]{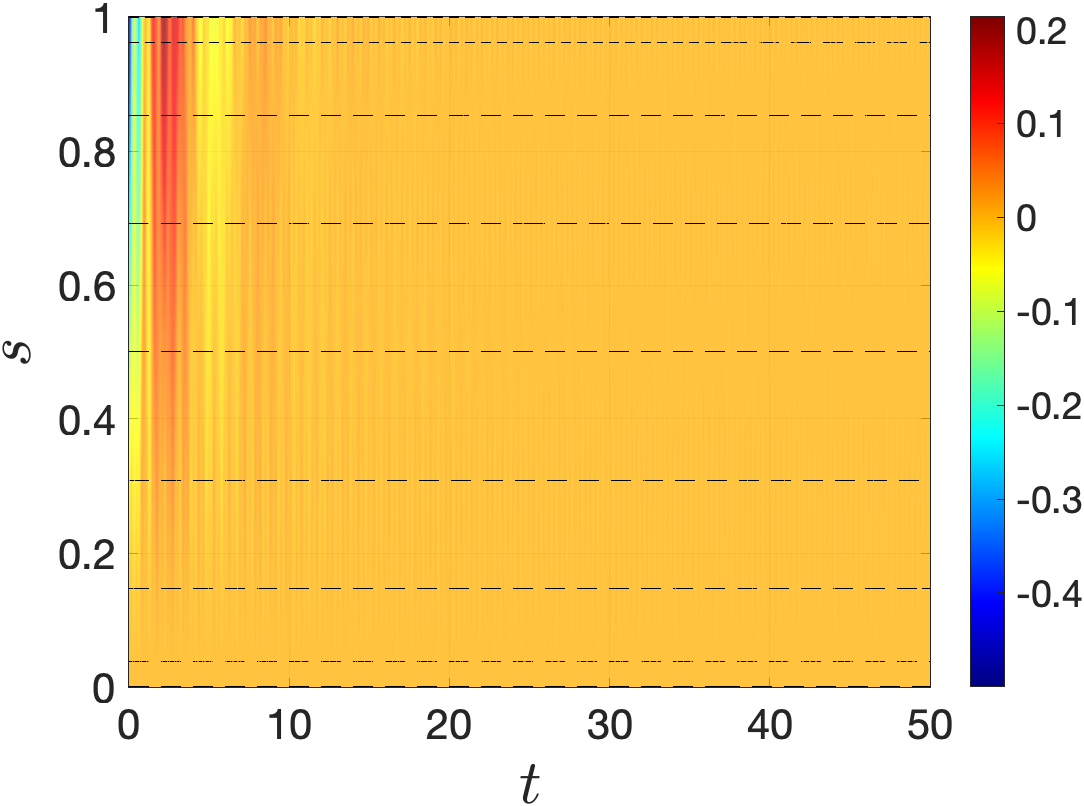}
    \caption{}
    \end{subfigure}
    \begin{subfigure}{0.49\linewidth} \includegraphics[width=\linewidth,height=40mm]{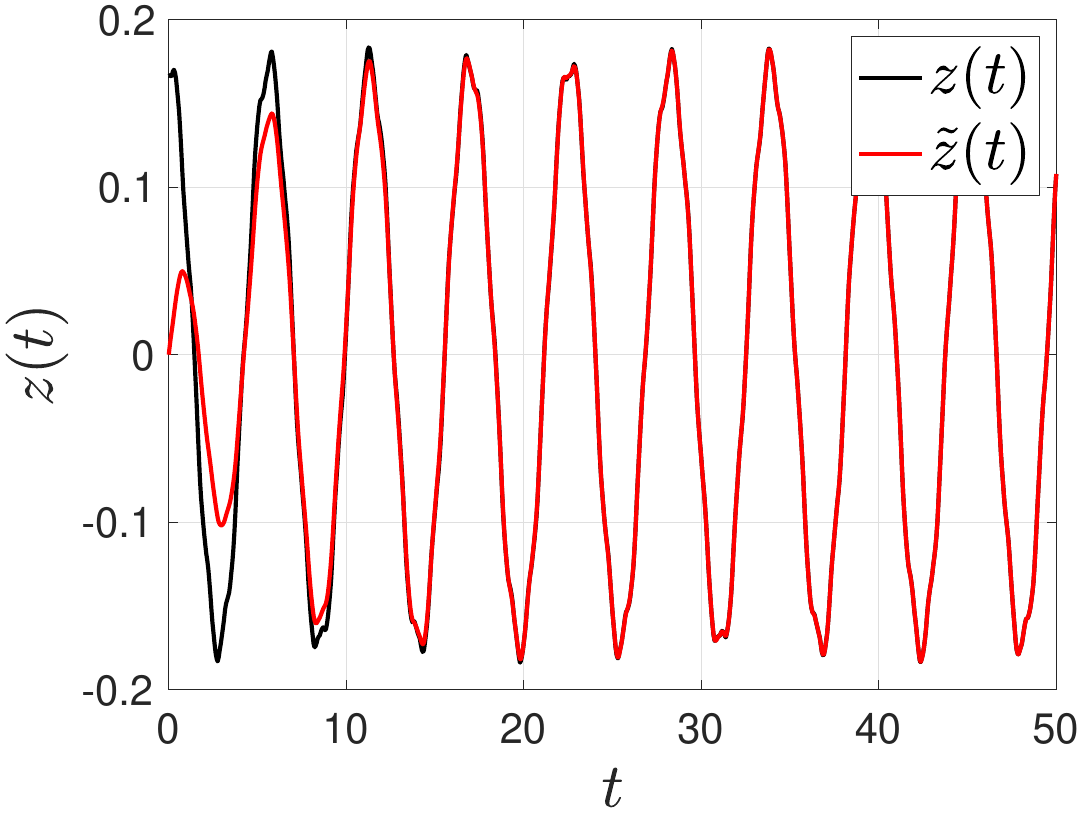}
    \caption{}
    \end{subfigure}
    \caption{Numerical estimation of an $H_2$-optimal estimator for a neutrally stable Euler-Bernoulli beam equation (Ex.~C) using velocity measurement at the boundary with disturbance $w(t)=e^{-\frac{t}{2}}sin(10t)$ and with PDE initial conditions $\dot{\eta}(0,s)=s^2/2$, $\partial_s^2 \eta (0,s)=0$. (a): Evolution of error in estimate of the PDE state: $ \dot{\tilde{\eta}}(t,\cdot)-\dot{\eta}(t,\cdot)$. (b): Evolution of the regulated output ($z(t)$) of both estimator and PDE.}
    \label{fig:EBbeam}
\end{figure}
\addtolength{\textheight}{-4cm}
\bibliography{Braghini_CDC2025_H2}
\bibliographystyle{IEEEtran}
\end{document}